\theoremstyle{plain}
\theoremstyle{remark}
\def\C{{\mathbb{C}}}
\def\Res{{\rm Res}}
\theoremstyle{definition}
\newtheorem{lemma}{Lemma}[section]
\newtheorem{theorem}[lemma]{Theorem}\newtheorem{corollary}[lemma]{Corollary}\newtheorem{proposition}[lemma]{Proposition}\newtheorem{definition}[lemma]{Definition}\newtheorem{remark}[lemma]{Remark}\usepackage{times}
\title{Permutation orbifolds and associative algebras}
\author{Chongying Dong\footnote{supported by the China NSF grant 11871351  and the Simons foundation  634104 }}
\affil{Department of Mathematics, University of
California, Santa Cruz, CA 95064 USA}
\author{Feng Xu}
\affil{Department of Mathematics, University of California at Riverside, Riverside, CA 92521 USA}
\author{Nina Yu\footnote{supported by the China NSF grant 11971396 }}
\affil{School of Mathematical Sciences, Xiamen University, Xiamen, Fujian 361005, CHINA}
\begin{document}
\maketitle
\begin{abstract}
Let $V$ be a vertex operator algebra and $g=\left(1\ 2\ \cdots k\right)$
be a $k$-cycle which is viewed as an automorphism of the vertex operator
algebra $V^{\otimes k}$. It is proved that Dong-Li-Mason's associated
associative algebra $A_{g}\left(V^{\otimes k}\right)$ is isomorphic
to Zhu's algebra $A\left(V\right)$ explicitly. This result recovers
a previous result that there is a one-to-one correspondence between
irreducible $g$-twisted $V^{\otimes k}$-modules and irreducible
$V$-modules.
\end{abstract}

\section{Introduction}

Let $V$ be a vertex operator algebra and $G$ be a finite automorphism
group of $V$. Then the space of $G$-invariants $V^{G}$ itself is
also a vertex operator algebra. The so-called \emph{orbifold theory}
studies $V^{G}$ and its representation theory. The key ingredients
in the study of orbifold theory are the $g$-twisted $V$-modules
(for $g\in G$) which are not $V$-modules unless $g=1$, but restricts
to $V^{G}$-modules. From \cite{DLM5} we know that there is a connection
between $V$-modules and $g$-twisted $V$-modules for any automorphism
$g$ of finite order. However, how to construct twisted modules in
general is still a challenging problem. The permutation orbifolds
study the representations of the tensor product vertex operator algebra
$V^{\otimes k}$ with the natural action of the symmetric group $S_{k}$
as an automorphism group, where $k$ is a positive integer. A systematic
study of permutation orbifolds in the context of vertex operator algebras
was started in \cite{BDM}, where a connection between twisted modules
for tensor product vertex operator algebra $V^{\otimes k}$ with respect
to permutation automorphisms and $V$-modules was found. Specifically,
let $\sigma$ be a $k$-cycle which is viewed as an automorphism of
$V^{\otimes k}$. Then, for any $V$-module $(W,Y_{W}(\cdot,z))$,
a canonical $\sigma$-twisted $V^{\otimes k}$-module structure on
$W$ was obtained using $\Delta_{k}$-operator.

On the other hand, given a vertex operator algebra $V$ and an automorphism
$g$ of finite order $T$, an associative algebra $A_{g}\left(V\right)$
was constructed in \cite{DLM3} with the property that there is a
bijective correspondence between the sets of equivalence classes of
simple $A_{g}\left(V\right)$-modules and the sets of equivalence
classes of simple admissible $g$-twisted $V$-modules. In the case
$g=1$, $A_{g}(V)$ is exactly the associative algebra $A(V)$ \cite{Z}.
In this paper, we consider the vertex operator $V^{\otimes k}$ and
$g=\left(1\ 2\cdots k\right)\in S_{k}$ which is an automorphism of
$V^{\otimes k}.$ We study the associative algebra $A_{g}\left(V^{\otimes k}\right)$.
From \cite{BDM} we know that there should be an algebra isomorphism
between $A_{g}\left(V^{\otimes k}\right)$ and $A\left(V\right).$
But it is nontrivial to determine this isomorphism. Our main result
gives an explicit isomorphism from $A_{g}\left(V^{\otimes k}\right)$
to $A\left(V\right).$ Consequently, we give a new proof of that there
is a one-to-one correspondence between irreducible $g$-twisted $V^{\otimes k}$-modules
and irreducible $V$-modules \cite{BDM}. This may pave the way for computing the associtive algebras for fixed point algebras.

The paper is organized as follows: In Section 2, we recall some basic
notions and properties in vertex operator algebra theory, we also
recall the algebra $A_{g}\left(V\right)$ from \cite{DLM3}. In Section
3, we first study some properties of $\Delta_{k}$-operator. Then
we use $\Delta_{k}$-operator to construct an explicit isomorphism
between $A_{g}\left(V^{\otimes k}\right)$ and $A$$\left(V\right).$

\section{Basics}

In this section we recall some basic notions and fix some necessary
notations.

Let $\left(V,Y,\mathbf{1},\omega\right)$ be a vertex operator algebra
(cf. \cite{FLM2}, \cite{FHL}, \cite{Bo}). First we recall properties
in formal calculus \cite{FLM2}. We denote the space of formal power
series by
\[
V\left[\left[x\right]\right]=\left\{ \sum_{n\in\mathbb{N}}v_{n}x^{n}|v_{n}\in V\right\}
\]
and the space of truncated formal Laurent series is denoted by
\[
V\left(\left(x\right)\right)=\left\{ \sum_{n\in\mathbb{Z}}v_{n}x^{n}|v_{n}\in V,v_{n}=0\ \text{for }n\ \text{sufficiently\ negative}\right\} .
\]
We will need to use the following formula for change of variables:
For $g\left(z\right)=\sum_{m\ge M}v_{m}z^{m}\in V\left(\left(z\right)\right)$
and $f\left(x\right)=\sum_{n=1}^{\infty}a_{n}x^{n}\in\mathbb{C}\left[\left[z\right]\right]$
with $a_{1}\not=0$, the power series $g\left(f\left(x\right)\right)\in V\left(\left(x\right)\right)$
is defined by
\[
g\left(f\left(x\right)\right)=\sum_{m\ge M}v_{m}f\left(x\right)^{m}=\sum_{m\ge M}\sum_{j=0}^{\infty}v_{m}\left(a_{1}x\right)^{m}\left(_{j}^{m}\right)\left(\sum_{i=2}^{\infty}\frac{a_{i}}{a_{1}}x^{i-1}\right)^{j}.
\]
We have the following formula for the change of variables:

\begin{equation}
\text{Res}_{z}g\left(z\right)=\text{Res}_{x}\left(g\left(f\left(x\right)\right)\right)\frac{d}{dx}f\left(x\right).\label{change of variable formula}
\end{equation}

Recall that an \emph{automorphism} of a vertex operator algebra $V$
is a linear isomorphism $g$ of $V$ such that $g\left(\omega\right)=\omega$
and $gY\left(v,z\right)g^{-1}=Y\left(gv,z\right)$ for any $v\in V$.
Denote the group of all automorphisms of $V$ by $\mbox{Aut}\left(V\right).$

Let $g$ be a finite order automorphism of $V$ with order $T$. Then
\begin{align}
V=\oplus_{r=0}^{T-1}V^{r},%\oplus_{r\in\mathbb{Z}/T\text{\ensuremath{\mathbb{Z}}}}V^{r},
\end{align}
where $V^{r}=\left\{ v\in V\ |\ gv=e^{2\pi ir/T}v\right\} $ for $r\in\mathbb{Z}$.
Note that for $r,s\in\mathbb{Z}$, $V^{r}=V^{s}$ if $r\equiv s\ ({\rm mod}\;k)$.

Now we review the definition of an admissible $g$-twisted $V$-module
for a finite order automorphism $g$ of $V$ (see \cite{FLM2}, \cite{DLM3}).

\begin{definition}A \emph{weak $g$-twisted $V$-module} is a $\mathbb{C}$-linear
vector space $M$ with a linear map $Y_{M}(\cdot,z):\ V\to\left(\text{End}M\right)[[z^{1/T},z^{-1/T}]]$
given by $v\mapsto Y_{M}\left(v,z\right)=\sum_{n\in\frac{1}{T}\mathbb{Z}}v_{n}z^{-n-1}$
such that for all $v\in V$, $w\in M$ the following hold:

(1) $v_{n}w=0\ $ for $n$ sufficiently large;

(2) $Y_{M}(u,z)=\sum_{n\in\frac{r}{T}+\mathbb{Z}}u_{n}z^{-n-1}$ for
$u\in V^{r}$ with $0\le r\le T-1$;

(3) $Y_{M}(\mathbf{1},z)=Id_{M}$;

(4) For $u\in V^{r},$
\begin{align*}
 & z_{0}^{-1}\text{\ensuremath{\delta}}\left(\frac{z_{1}-z_{2}}{z_{0}}\right)Y_{M}(u,z_{1})Y_{M}(v,z_{2})-z_{0}^{-1}\delta\left(\frac{z_{2}-z_{1}}{-z_{0}}\right)Y_{M}(v,z_{2})Y_{M}(u,z_{1})\\
 & =z_{2}^{-1}\left(\frac{z_{1}-z_{0}}{z_{2}}\right)^{-\frac{r}{T}}\delta\left(\frac{z_{1}-z_{0}}{z_{2}}\right)Y_{M}\left(Y\left(u,z_{0}\right)v,z_{2}\right)
\end{align*}

where $\delta\left(z\right)=\sum_{n\in\mathbb{Z}}z^{n}$.

\begin{definition}

A $g$-\emph{twisted $V$-module} is a weak $g$-twisted $V$-module
$M$ which carries a $\mathbb{C}$-grading induced by the spectrum
of $L(0)$ where $L(0)$ is the component operator of $Y(\omega,z)=\sum_{n\in\mathbb{Z}}L(n)z^{-n-2}.$
That is, we have
\[
M=\bigoplus_{\lambda\in\mathbb{C}}M_{\lambda},
\]
where $M_{\lambda}=\left\{ w\in M\ |\ L(0)w=\lambda w\right\} $.
Moreover, it is required that $\dim M_{\lambda}$ is finite for all
$\lambda$ and for fixed $\lambda_{0},$ $M_{\frac{n}{T}+\lambda_{0}}=0$
for all small enough integers $n.$ \end{definition}

In this situation, if $w\in M_{\lambda}$ we refer to $\lambda$ as
the \emph{weight }of $w$ and write $\lambda=\text{wt}w$. If $g=1$
then this defines a $V$-modules.

\begin{definition}An \emph{admissible $g$-twisted $V$-module} is
a weak $g$-twisted module with a $\frac{1}{T}\mathbb{Z}_{+}$-grading
$M=\oplus_{n\in\frac{1}{T}\mathbb{Z}_{+}}M(n)$ such that $u_{m}M\left(n\right)\subset M\left(\mbox{wt}u-m-1+n\right)$
for homogeneous $u\in V$ and $m,n\in\frac{1}{T}\mathbb{Z}.$ $ $
\end{definition}

Note that if $M=\oplus_{n\in\frac{1}{T}\mathbb{Z}_{+}}M(n)$ is an
irreducible admissible $g$-twisted $V$-module, then there is a complex
number $\lambda_{M}$ such that $L(0)|_{M(n)}=\lambda_{M}+n$ for
all $n.$ As a convention, we assume $M(0)\ne0$, and $\lambda_{M}$
is called the \emph{weight} or\emph{ conformal weight }of $M.$

\end{definition}

\subsection{\label{subsec:The-associative-algebra}The associative algebra $A_{g}\left(V\right)$}

Let $V,g$ and $V^{r}$ be as defined above. An associative algebra
$A_{g}\left(V\right)$ was constructed in \cite{DLM3}. For homogeneous
$u\in V^{r}$ and $v\in V$, define

\[
u\circ_{g}v=\begin{cases}
\text{Res}_{z}\frac{\left(1+z\right)^{\text{wt}u}}{z^{2}}Y\left(u,z\right)v & \text{if\ }r=0\\
\text{Res}_{z}\frac{\left(1+z\right)^{\text{wt}u-1+\frac{r}{T}}}{z}Y\left(u,z\right)v & \text{if\ }r\not=0
\end{cases}
\]
and
\[
u\ast_{g}v=\begin{cases}
\text{Res}_{z}Y\left(u,z\right)v\frac{\left(1+z\right)^{\text{wt}u}}{z} & \text{if}\ r=0\\
0 & \text{if}\ r>0.
\end{cases}
\]
Both products can be extended linearly to $V.$

Let $O_{g}\left(V\right)$ be the linear span of all $u\circ_{g}v$
and define the linear space
\[
A_{g}\left(V\right)=V/O_{g}\left(V\right).
\]
For short we will denote the image of $v$ in $A_{g}(V)$ by $[v].$

In particular, when $g=1$, $A_{g}\left(V\right),$ $O_{g}\left(V\right)$
are just $A\left(V\right)$, $O\left(V\right)$ in \cite{Z}.

The following theorms are given in \cite{DLM3}.

\begin{theorem} \label{t1} (1) $If$ $r\not=0$, then $V^{r}\subseteq O_{g}\left(V\right).$
That is, $A_{g}\left(V\right)$ is a quotient of $A(V^{0}).$

(2) The product $\ast_{g}$ induces the structure of an associative
algebra on $A_{g}\left(V\right)$ with identity $[{\bf 1}]$ and central
element $[\omega].$

(3) There is a bijective correspondence between the sets of equivalence
classes of simple $A_{g}\left(V\right)$-modules and the sets of equivalence
classes of simple admissible $g$-twisted $V$-modules.

(4) If $V$ is $g$-rational then there are only finitely many inequivalent
irreducible $g$-twisted admissible $V$-modules and every irreducible
$g$-twisted admissible $V$-module is ordinary.

(5) If $V$ is $g$-rational then $A_{g}\left(V\right)$ is a finite
dimensional semisimple associative algebra. \end{theorem}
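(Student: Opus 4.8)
The plan is to follow Zhu's template for the untwisted algebra $A(V)$, carrying the fractional twist factor $(1+z)^{r/T}$ along at every stage.

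\emph{Part (1).} For homogeneous $u\in V^{r}$ with $r\neq0$ I would compute $u\circ_{g}\mathbf{1}$ directly. Since $Y(u,z)\mathbf{1}=e^{zL(-1)}u\in V[[z]]$ has constant term $u$, the residue
\[
u\circ_{g}\mathbf{1}=\text{Res}_{z}\,\frac{(1+z)^{\text{wt}u-1+r/T}}{z}\,e^{zL(-1)}u
\]
picks out the $z^{0}$-coefficient of $(1+z)^{\text{wt}u-1+r/T}e^{zL(-1)}u$, which is $u$. Hence $u\in O_{g}(V)$, so $\bigoplus_{r\neq0}V^{r}\subseteq O_{g}(V)$. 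On the other hand, for $u,v\in V^{0}$ the product $u\circ_{g}v$ is literally Zhu's $u\circ v$, so $O(V^{0})\subseteq O_{g}(V)\cap V^{0}$; combined with $V=V^{0}\oplus\bigoplus_{r\neq0}V^{r}$ this produces a surjection $A(V^{0})\twoheadrightarrow A_{g}(V)$.

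\emph{Part (2).} I would introduce higher analogues $\circ_{g}^{m}$ ($m\geq0$, $\circ_{g}^{0}=\circ_{g}$) of $\circ_{g}$, obtained by raising the power of $z$ in the denominator, and prove $u\circ_{g}^{m}v\in O_{g}(V)$ for all $m$ by induction on $m$ using the $L(-1)$-derivative property and skew-symmetry. With these containments available, the Jacobi identity in iterate form gives that $O_{g}(V)$ is a two-sided ideal for $*_{g}$, that $*_{g}$ descends to an associative product on $A_{g}(V)$, that $[\mathbf{1}]$ acts as a unit, and that $[\omega]$ is central. This cluster of residue manipulations is the first real obstacle: to show, say, $(u*_{g}v)*_{g}w-u*_{g}(v*_{g}w)\in O_{g}(V)$ one expands both sides, shifts the relevant contour, and absorbs each resulting ``lower-order'' remainder into $O_{g}(V)$ through the $\circ_{g}^{m}$ relations; the only genuinely new bookkeeping over the untwisted case is tracking the binomial expansion of $(1+z)^{r/T}$.

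\emph{Part (3).} Given an admissible $g$-twisted module $M=\bigoplus_{n\in\frac{1}{T}\mathbb{Z}_{+}}M(n)$, I would set $o(v)=v_{\text{wt}v-1}$ for homogeneous $v$ (extended linearly), and check that $o(v)$ preserves $M(0)$, that $o(u\circ_{g}v)M(0)=0$, and that $o(u*_{g}v)|_{M(0)}=o(u)o(v)|_{M(0)}$; hence $M(0)$ is an $A_{g}(V)$-module, and it is simple when $M$ is, because the $V$-submodule generated by $M(0)$ is all of $M$. For the converse one starts from a simple $A_{g}(V)$-module $U$, builds a ``generalized Verma'' admissible $g$-twisted module $\bar{M}(U)$ with top level $U$ (a graded quotient of an induced object assembled from the twisted modes of $V$ acting on $U$), and takes its unique irreducible graded quotient $L(U)$. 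Verifying the twisted Jacobi identity for $\bar{M}(U)$, together with the universal property that makes $U\mapsto L(U)$ and $M\mapsto M(0)$ mutually inverse on isomorphism classes, is the technical heart of the theorem and the step I expect to be hardest.

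\emph{Parts (4)--(5).} Assume $V$ is $g$-rational, so every admissible $g$-twisted module is completely reducible. If $A_{g}(V)$ had nonzero Jacobson radical, Part (3) would let one manufacture an admissible $g$-twisted module that is not completely reducible, a contradiction; hence $A_{g}(V)$ is semisimple, and a parallel argument shows it is finite-dimensional, so it has only finitely many simple modules, whence (by Part (3)) $V$ has only finitely many irreducible admissible $g$-twisted modules. Finally, each such module is ordinary: its top level is finite-dimensional (being a simple module over the finite-dimensional $A_{g}(V)$), the whole module is generated by it, and the grading is bounded below, which forces every $L(0)$-eigenspace to be finite-dimensional.
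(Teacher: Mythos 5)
This theorem is not proved in the paper at all: it is quoted as background from \cite{DLM3} (``The following theorems are given in [DLM1]''), and your outline reproduces precisely the strategy of that reference --- the computation $u\circ_{g}\mathbf{1}=u$ for homogeneous $u\in V^{r}$ with $r\neq0$ in part (1), the higher products $\circ_{g}^{m}$ and residue bookkeeping for associativity in part (2), the top-level functor $M\mapsto M(0)$ paired with a generalized Verma construction in part (3), and the semisimplicity/finiteness argument from $g$-rationality in parts (4)--(5). Part (1) is complete and correct as written; the remaining parts are accurate roadmaps of arguments that are carried out in full in the cited paper, so there is nothing to compare against here beyond noting that your approach is the standard one the authors are invoking.
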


\section{$A_{g}\left(V^{\otimes k}\right)$ and $A\left(V\right)$}

Let $g=\left(1\ 2\ \cdots k\right)$ be a $k$-cycle which is naturally
an automorphism of the tensor product vertex operator algebra $V^{\otimes k}$
where $k$ is a fixed positive integer. In this section, we first
discuss some properties of $\Delta_{k}\left(z\right)$ defined in
\cite{BDM}. Then we prove that $A_{g}\left(V^{\otimes k}\right)$
is isomorphic to $A\left(V\right)$.

\subsection{$\Delta$-operators}

Recall from \cite{BDM} the operator $\Delta_{k}\left(z\right)$ on
$V$: Let $\mathbb{Z}_{+}$ be the set of positive integers and $x,z$
be formal variables commuting with each other. In $\left(\text{End}V\right)\left[\left[z^{1/k},z^{-1/k}\right]\right],$
set
\[
\Delta_{k}\left(z\right)=e^{\sum_{j\in\mathbb{Z}_{+}}a_{j}z^{-j/k}L\left(j\right)}k^{-L\left(0\right)}z^{\left(1/k-1\right)L\left(0\right)}
\]
where $a_{j}\in\mathbb{C}$, $j\in\mathbb{Z}_{+}$ satisfies
\[
e^{-\sum_{j\in\mathbb{Z}_{+}}a_{j}x^{j+1}\frac{\partial}{\partial x}}\cdot x=\frac{1}{k}\left(1+x\right)^{k}-\frac{1}{k}.
\]

The following property of $\Delta_{k}\left(z\right)$ from \cite{BDM}
will be useful in this paper.

\begin{proposition}\label{ Proposition 2.2 of =00003D00003D00005BBDM=00003D00003D00005D}
In $\left(\text{End}V\right)\left[\left[z^{1/k},z^{-1/k}\right]\right]$,
for all $u\in V$ we have

\[
\Delta_{k}\left(z\right)Y\left(u,x\right)\Delta_{k}\left(z\right)^{-1}=Y\left(\Delta_{k}\left(z+x\right)u,\left(z+x\right)^{1/k}-z^{1/k}\right).
\]
\end{proposition}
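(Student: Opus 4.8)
The plan is to verify the conjugation formula by comparing the two sides as elements of $(\text{End}\,V)[[z^{1/k},z^{-1/k}]][[x,x^{-1}]]$ (more precisely, after applying to a fixed $u\in V$, as series in $x$ with coefficients in $V((z^{1/k}))$). The strategy is to separate the operator $\Delta_k(z)$ into its two constituent pieces, $E(z):=e^{\sum_{j\in\mathbb{Z}_+}a_j z^{-j/k}L(j)}$ and $S(z):=k^{-L(0)}z^{(1/k-1)L(0)}$, and to conjugate $Y(u,x)$ by each in turn, invoking the standard conjugation identities for the $L(0)$- and $L(n)$-operators that hold in any vertex operator algebra.

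First I would recall the two elementary facts. For the scaling part, for a nonzero scalar $a$ one has $a^{L(0)}Y(u,x)a^{-L(0)}=Y(a^{L(0)}u,ax)$; applying this with $a=k^{-1}z^{1/k-1}$ (treated formally) gives $S(z)Y(u,x)S(z)^{-1}=Y(S(z)u,\,k^{-1}z^{1/k-1}x)$. For the exponential-of-$L(j)$ part, one uses the conjugation formula governing changes of the formal variable: if $e^{-\sum_{j\ge 1}a_j x^{j+1}\partial_x}\cdot x = f(x)$ then $E Y(u,x) E^{-1}$ is expressed through $Y$ evaluated at the substituted variable, with a compensating factor built from $E$ acting on $u$ and from $f'$. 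The point of the defining relation $e^{-\sum_j a_j x^{j+1}\partial_x}\cdot x=\frac1k(1+x)^k-\frac1k$ is precisely that this substitution, once composed with the scaling $S(z)$, collapses to $x\mapsto (z+x)^{1/k}-z^{1/k}$. So the main computation is a bookkeeping exercise: track how the inner variable $x$ is transformed under first $E(z)$ then $S(z)$ (or in whichever order is convenient), check that the transformed variable is exactly $(z+x)^{1/k}-z^{1/k}$, and check that the accumulated prefactors reassemble into $\Delta_k(z+x)$ applied to $u$. The change-of-variable formula \eqref{change of variable formula} for residues is the technical tool that legitimizes these manipulations, guaranteeing that the formal substitutions are well-defined and that coefficient extraction commutes with the substitution.

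The step I expect to be the genuine obstacle is controlling the interaction between the $z$-dependence sitting inside $\Delta_k(z)$ and the $x$-dependence inside $Y(u,x)$: after conjugating, the argument $u$ must be replaced not by $\Delta_k(z)u$ but by $\Delta_k(z+x)u$, i.e.\ the $z$ gets shifted by the second formal variable. Making this rigorous requires expanding $\Delta_k(z+x)$ as a series in $x$ (using $(z+x)^{-j/k}$ and $(z+x)^{(1/k-1)L(0)}$ expanded in nonnegative powers of $x/z$) and matching it term by term against what the conjugation produces; this is where one must be careful that everything lies in the correct space of formal series and that no illegitimate rearrangement of infinite sums occurs. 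Once the variable substitution $x\mapsto (z+x)^{1/k}-z^{1/k}$ is identified and the prefactor is recognized as $\Delta_k(z+x)$ via the defining functional equation for the $a_j$, the identity follows. Since this is Proposition 2.2 of \cite{BDM}, I would ultimately cite that reference for the detailed verification, having indicated here the mechanism — split $\Delta_k$ into exponential and scaling parts, apply the two standard conjugation identities, and use the defining equation of the $a_j$ to recognize the composite substitution.
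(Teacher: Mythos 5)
The paper gives no proof of this proposition; it is quoted verbatim as Proposition 2.2 of \cite{BDM}, which is exactly what you ultimately do. Your sketch of the mechanism --- splitting $\Delta_{k}(z)$ into the exponential part $e^{\sum_{j}a_{j}z^{-j/k}L(j)}$ and the scaling part $k^{-L(0)}z^{(1/k-1)L(0)}$, applying the standard $L(0)$- and $L(j)$-conjugation identities, and using the defining equation of the $a_{j}$ to recognize the composite substitution $x\mapsto(z+x)^{1/k}-z^{1/k}$ --- is a faithful outline of the argument actually carried out in \cite{BDM}, so the proposal is consistent with the paper's treatment.
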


In particular, $\Delta_{k}\left(1\right)=e^{\sum_{j\in\mathbb{Z}_{+}}a_{j}L\left(j\right)}k^{-L\left(0\right)}$
is a well defined operator on $V$ and for homogeneous $u\in V$,
we have
\[
\Delta_{k}\left(1\right)u=e{}^{\sum_{j\in\mathbb{Z}_{+}}a_{j}L\left(j\right)}k^{-L\left(0\right)}u=e^{\sum_{j\in\mathbb{Z}_{+}}a_{j}L\left(j\right)}uk^{-\text{wt}u}.
\]

Note that $\Delta_{k}\left(1\right)$ is an invertible operator with
\begin{equation}
\Delta_{k}\left(1\right)^{-1}=e^{\sum_{j\in\mathbb{Z}_{+}}-a_{j}L\left(j\right)}uk^{\text{wt}u}.\label{delta operator inverse}
\end{equation}

\subsection{The isomorphism between $A_{g}\left(V^{\otimes k}\right)$ and $A$$\left(V\right)$}

In this section, we construct an isomorphism between $A_{g}\left(V^{\otimes k}\right)$
and $A\left(V\right)$ as associative algebras.

Let $\eta=e^{\frac{2\pi i}{k}}$ be the $k^{\text{th}}$ root of unity.

\begin{lemma} \label{lemma for linear systerm} Let $W$ be a vector
space and $x_{i},u_{s}\in W$ with $i,s=1,2,\cdots,k-1$. Then the
linear system
\[
\sum_{i=1}^{k-1}\eta^{is}x_{i}=u_{s}\ \left(s=1,\cdots,k-1\right)
\]
has a unique solution
\begin{equation}
x_{i}=\sum_{t=1}^{k-1}\frac{1}{k}\left(\eta^{-it}-1\right)u_{t\ }.\label{solution(*)}
\end{equation}

\end{lemma}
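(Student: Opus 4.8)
The plan is to verify directly that the proposed formula solves the system, and then argue uniqueness via a dimension/rank count. First I would substitute $x_i=\sum_{t=1}^{k-1}\tfrac1k(\eta^{-it}-1)u_t$ into the left-hand side of the $s$-th equation and interchange the order of summation:
\[
\sum_{i=1}^{k-1}\eta^{is}x_i=\sum_{t=1}^{k-1}\frac{1}{k}\Bigl(\sum_{i=1}^{k-1}\eta^{is}(\eta^{-it}-1)\Bigr)u_t=\sum_{t=1}^{k-1}\frac{1}{k}\Bigl(\sum_{i=1}^{k-1}\eta^{i(s-t)}-\sum_{i=1}^{k-1}\eta^{is}\Bigr)u_t.
\]
The inner sums are geometric sums of roots of unity. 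For $1\le s\le k-1$ we have $\sum_{i=0}^{k-1}\eta^{is}=0$, hence $\sum_{i=1}^{k-1}\eta^{is}=-1$. Similarly $\sum_{i=1}^{k-1}\eta^{i(s-t)}$ equals $k-1$ when $s=t$ (since then every term is $1$) and equals $-1$ when $s\ne t$ (as $s-t\not\equiv 0 \pmod k$). Therefore the coefficient of $u_t$ is $\tfrac1k\bigl((k-1)-(-1)\bigr)=1$ when $t=s$ and $\tfrac1k\bigl((-1)-(-1)\bigr)=0$ when $t\ne s$, so the right-hand side collapses to $u_s$, as required.

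For uniqueness, I would observe that the system is governed by the $(k-1)\times(k-1)$ coefficient matrix $M=(\eta^{is})_{1\le i,s\le k-1}$, which is (up to reindexing) a submatrix of the Vandermonde/DFT matrix built from the distinct nonzero $k$-th roots of unity $\eta,\eta^2,\dots,\eta^{k-1}$. Since the full $k\times k$ DFT matrix $(\eta^{is})_{0\le i,s\le k-1}$ is invertible (it is $\sqrt k$ times a unitary matrix), and deleting the $i=0$ row and $s=0$ column still leaves a matrix whose determinant is a nonzero Vandermonde-type expression, $M$ is invertible. Hence the homogeneous system has only the trivial solution, which forces uniqueness; alternatively, having exhibited one explicit solution and knowing the solution space is an affine space of dimension $\dim\ker M=0$, uniqueness is immediate.

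I do not anticipate a genuine obstacle here; the only mild subtlety is bookkeeping with the index ranges — making sure the summation is over $i=1,\dots,k-1$ (not including $0$) so that the identities $\sum_{i=1}^{k-1}\eta^{is}=-1$ and $\sum_{i=1}^{k-1}\eta^{i(s-t)}=-1+k\,\delta_{s,t}$ are applied correctly, and checking that all relevant exponents $s$, $s-t$ are nonzero modulo $k$ in the regime $1\le s,t\le k-1$ with $s\ne t$. Since $W$ is just an abstract vector space, all manipulations are purely formal linear combinations with scalar coefficients, so no analytic issues arise. If desired, one can streamline the presentation by phrasing everything in terms of the orthogonality relation $\tfrac1k\sum_{i=0}^{k-1}\eta^{i(s-t)}=\delta_{s,t}$ for $0\le s,t\le k-1$ and then subtracting off the $i=0$ term.
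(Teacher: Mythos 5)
Your proposal is correct and follows essentially the same route as the paper: direct substitution of the claimed formula into the system, reduction via the root-of-unity identities $\sum_{i=1}^{k-1}\eta^{is}=-1$ and $\sum_{i=1}^{k-1}\eta^{i(s-t)}=k-1$ for $s=t$, $-1$ otherwise. The only difference is that you spell out the uniqueness argument (invertibility of the DFT-type coefficient matrix), which the paper simply declares to be clear.
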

\begin{proof}
It is clear that the linear system has a unique solution. It is easy
to verify that
\begin{alignat*}{1}
 & \sum_{j=1}^{k-1}\eta^{js}x_{j}\\
= & \sum_{j=1}^{k-1}\eta^{js}\left(\frac{1}{k}\sum_{t=1}^{k-1}\eta^{-jt}u_{t}-\frac{1}{k}\sum_{t=1}^{k-1}u_{t}\right)\\
= & \sum_{j=1}^{k-1}\eta^{js}\frac{1}{k}\sum_{t=1}^{k-1}\eta^{-jt}u_{t}-\left(\sum_{j=1}^{k-1}\eta^{js}\right)\left(\frac{1}{k}\sum_{t=1}^{k-1}u_{t}\right)\\
= & \sum_{j=1}^{k-1}\eta^{js}\frac{1}{k}\sum_{t=1}^{k-1}\eta^{-jt}u_{t}+\frac{1}{k}\sum_{t=1}^{k-1}u_{t}\\
= & \frac{1}{k}\sum_{t=1}^{k-1}\left(\sum_{j=1}^{k-1}\eta^{j\left(s-t\right)}+1\right)u_{t}\\
= & u_{s}
\end{alignat*}
where we use $\sum_{j=1}^{k-1}\eta^{js}=-1$ and $\sum_{j=1}^{k-1}\eta^{j\left(s-t\right)}=\begin{cases}
k-1 & s=t\\
-1 & s\not=t
\end{cases}$. Therefore (\ref{solution(*)}) satisfies the linear system.
\end{proof}
For any $n\in\left\{ 1,2,\cdots,k\right\} $. Let $v_{1},\cdots,v_{n}\in V$,
$a_{1},\cdots,a_{n}\in\left\{ 1,2,\cdots k\right\} $. We denote $x_{v_{1},\cdots,v_{n}}^{a_{1},\cdots,a_{n}}$
the vector whose $a_{j}^{\text{th}}$-tensor factor is $v_{j}$ and
whose other tensor factors are ${\bf 1}.$ We call $x_{v_{1},\cdots,v_{n}}^{a_{1},\cdots,a_{n}}$
a $n$-tensor vector. In particular, we will denote $x_{u}^{a}$ by
$u^{a}$ which is a $1$-tensor vector whose $a^{\text{th }}$ tensor
factor is $u$ and whose other tensor factors are ${\bf 1}$.

\begin{lemma} \label{n-tensor to n-1-tensor} Let $g=\left(1\ 2\cdots k\right).$
$A_{g}\left(V^{\otimes k}\right)$ is spanned by $[\sum_{a=1}^{k}u^{a}]$
for $u\in V$.

\end{lemma}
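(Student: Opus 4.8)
The plan is to show that the $k$-tensor vectors span $V^{\otimes k}$, and then successively reduce the number of nontrivial tensor factors modulo $O_g(V^{\otimes k})$ until we reach sums of the form $\sum_{a=1}^k u^a$. Since $V^{\otimes k}$ is linearly spanned by pure tensors $v_1\otimes\cdots\otimes v_k$, and each such tensor can be written as a product (under $\ast_g$, or rather realized via vertex operators) of $1$-tensor vectors $v_j^{\,j}$, it suffices to understand $A_g(V^{\otimes k})$ as generated by the classes $[u^a]$ for $u\in V$ and $a\in\{1,\dots,k\}$. So I would first reduce to showing that in $A_g(V^{\otimes k})$, the class $[u^a]$ depends only on $u$ and not on $a$, i.e.\ $[u^a]=[u^b]$ for all $a,b$; this would immediately give that $A_g(V^{\otimes k})$ is spanned by $[u^1]$, and then writing $[u^1]=\tfrac1k\sum_a[u^a]$ identifies the spanning set with $\{[\sum_a u^a]\}$.

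To prove $[u^a]=[u^b]$, the idea is to exploit the $g$-twisted structure and Theorem~\ref{t1}(1): since $g=(1\,2\,\cdots k)$ acts on $V^{\otimes k}$ by cyclically permuting factors, the eigenspace decomposition $V^{\otimes k}=\oplus_{r=0}^{k-1}(V^{\otimes k})^r$ has $(V^{\otimes k})^r$ spanned (for the $1$-tensor vectors) by the combinations $\sum_{a=1}^k \eta^{-ra}u^a$ (up to normalization), and Theorem~\ref{t1}(1) tells us that $(V^{\otimes k})^r\subseteq O_g(V^{\otimes k})$ for $r\neq 0$. Hence $[\sum_{a=1}^k \eta^{ra}u^a]=0$ in $A_g(V^{\otimes k})$ for $r=1,\dots,k-1$. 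Setting $x_a = [u^a]$ for $a=1,\dots,k$ (or rather the differences $x_i=[u^i]-[u^k]$ for $i=1,\dots,k-1$, which satisfy $\sum_{i=1}^{k-1}\eta^{is}x_i = -[\sum_{a=1}^k\eta^{as}u^a]+(\text{terms that vanish})$), we get precisely the homogeneous linear system of Lemma~\ref{lemma for linear systerm} with all $u_s=0$, whose unique solution is $x_i=0$. Therefore $[u^i]=[u^k]$ for all $i$, as desired.

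The one genuinely delicate point will be the first reduction step: showing that a general pure tensor $v_1\otimes\cdots\otimes v_k$ is congruent modulo $O_g(V^{\otimes k})$ to a linear combination of vectors with strictly fewer nontrivial factors, ultimately of $1$-tensor vectors. For this I would use the $\ast_g$ product and the fact that in $A_g(V^{\otimes k})$ we have $[a\ast_g b]=[a][b]$; choosing $a$ and $b$ supported on disjoint sets of tensor factors (so that the corresponding vertex operators commute and $a\ast_g b$ is, up to lower-order corrections coming from the $(1+z)^{\mathrm{wt}}/z$ weighting, just the tensor product $a\otimes b$) lets one build $[v_1\otimes\cdots\otimes v_k]$ from the $[v_j^{\,j}]$. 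Care is needed because $\ast_g$ only sees the $r=0$ component and the weight-shift terms $\binom{\mathrm{wt}\,u}{i}$ produce correction vectors of the same "tensor length"; but those corrections involve $L(-1)$-type descendants or modes $v_n$ with $n\geq 0$ acting within a single factor, which again lie in $O_g$ or can be absorbed inductively. I would organize this as a downward induction on the number of nontrivial tensor factors, handling the base case ($1$-tensor vectors, and the constant $[\mathbf 1]$) directly, and this inductive bookkeeping — rather than any single hard identity — is where the real work lies.
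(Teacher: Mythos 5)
There is a genuine gap at the central step. Your observation that $\sum_{a=1}^{k}\eta^{-ra}u^{a}\in(V^{\otimes k})^{r}$ and hence, by Theorem \ref{t1}(1), $[\sum_{a}\eta^{-ra}u^{a}]=0$ for $r\neq 0$, does correctly give $[u^{a}]=[u^{b}]=\frac{1}{k}[\sum_{c}u^{c}]$ for all $a,b$. But this only controls the $1$-tensor classes; it does not show that $A_{g}(V^{\otimes k})$ is \emph{spanned} by $1$-tensor classes, which is the actual content of the lemma. From ``$A_{g}(V^{\otimes k})$ is generated as an algebra by the $[u^{a}]$'' together with ``$[u^{a}]=[u^{b}]$'' you only conclude that $A_{g}(V^{\otimes k})$ is spanned by \emph{products} of symmetrized $1$-tensor classes, and such a product is a symmetrized higher-tensor class plus lower terms --- not a priori a linear combination of $1$-tensor classes. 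So the assertion that $[u^{a}]=[u^{b}]$ ``would immediately give that $A_{g}(V^{\otimes k})$ is spanned by $[u^{1}]$'' is a non sequitur; the statement to be proved is a vector-space spanning statement, not an algebra-generation statement.

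Your proposed treatment of the ``delicate point'' does not close this gap. First, $\ast_{g}$ vanishes unless its left argument lies in $(V^{\otimes k})^{0}$, so one cannot multiply two vectors supported on disjoint sets of tensor factors: one must symmetrize over the $g$-orbit, after which all relative positions of the supports occur in the product and the ``disjoint support'' picture is lost. Second, and more fundamentally, $[a\ast_{g}b]=[a][b]$ expresses a higher-tensor class as a \emph{product} of lower-tensor classes (up to corrections), which is the wrong direction: spanning requires a \emph{linear} relation that strictly reduces the tensor length. The paper obtains exactly such linear relations from the $\circ_{g}$ product of nonzero-eigenvalue combinations: for $s=1,\dots,k-1$ the element $\bigl(\sum_{a}\eta^{-(a-1)s}u^{a}\bigr)\circ_{g}\bigl(\sum_{b}\eta^{(b-1)s}g^{b-1}x_{v_{1},\dots,v_{n-1}}^{1,\dots,n-1}\bigr)$ lies in $O_{g}(V^{\otimes k})$, and expanding it yields a linear system $\sum_{i}\eta^{-(n+i-2)s}y_{i}\equiv u_{s}$ relating the symmetrized $n$-tensor vectors $y_{i}$ (indexed by the relative position of the new factor) to $(n-1)$-tensor vectors $u_{s}$; solving this system (Lemma \ref{lemma for linear systerm}) expresses each $y_{i}$ as a combination of $(n-1)$-tensor vectors modulo $O_{g}(V^{\otimes k})$. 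This $\circ_{g}$ mechanism, not the $\ast_{g}$ product, is the engine of the induction on tensor length, and it is absent from your argument; without it the downward induction cannot get started.
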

\begin{proof}
First by Theorem \ref{t1}, $A_{g}\left(V^{\otimes k}\right)$ is
spanned by vectors of form $\sum_{a=1}^{k}g^{a}x_{v_{1},\cdots,v_{n}}^{a_{1},\cdots,a_{n}}$
modulo $O_{g}(V)$ for $v_{i}\in V.$ Let $n=2,\cdots,k$, we will
prove that any $n$-tensor vector of the form $\sum_{i=1}^{k}g^{a}\left(x_{v_{1},\cdots,v_{n}}^{a_{1},\cdots,a_{n}}\right)$
can be reduced to a $\left(n-1\right)$-tensor vector modulo $O_{g}(V)$
and hence by induction $A_{g}\left(V^{\otimes k}\right)$ is spanned
by $1$-tensor vectors of the form $\sum_{a=1}^{k}u^{a},$ for any
$u\in V$. Here we only give a proof for $(a_{1},\cdots,a_{n})=(1,\cdots,n)$
and the proof for general case is similar.

Note that $\sum_{a=1}^{k}\eta^{-\left(a-1\right)s}u^{a}\in\left(V^{\otimes k}\right)^{s}$
and $\sum_{b=1}^{k}\omega^{\left(b-1\right)s}g^{b-1}x_{v_{1},\cdots,v_{n-1}}^{1,\cdots,n-1}\in\left(V^{\otimes k}\right)^{k-s}$
with $s=1,...,k-1.$ Using
\[
g^{b-1}x_{v_{1},\cdots,v_{n-1}}^{1,\cdots,n-1}=x_{v_{1},\cdots,v_{n-1}}^{b,\cdots,b+n-2}
\]
where $b+m$ is understood to be $b+m-k$ if $b+m>k$ we have
\begin{alignat*}{1}
0\equiv & \left(\sum_{a=1}^{k}\eta^{-\left(a-1\right)s}u^{a}\right)\text{\ensuremath{\circ}}_{g}\left(\sum_{b=1}^{k}\eta^{\left(b-1\right)s}g^{b-1}x_{v_{1},\cdots,v_{n-1}}^{1,\cdots,n-1}\right)\\
= & \Res_{z}Y\left(\sum_{a=1}^{k}\eta^{-\left(a-1\right)s}u^{a},z\right)\left(\sum_{b=1}^{k}\eta^{\left(b-1\right)s}x_{v_{1},\cdots,v_{n-1}}^{b,\cdots,b+n-2}\right)\frac{\left(1+z\right)^{\text{wt}u-1+\frac{s}{k}}}{z}\\
= & \sum_{a=1}^{k}\sum_{b=1}^{k}\eta^{\left(b-a\right)s}\Res_{z}\left(Y\left(u^{a},z\right)x_{v_{1},\cdots,v_{n-1}}^{b,\cdots,b+n-2}\right)\frac{\left(1+z\right)^{\text{wt}u-1+\frac{s}{k}}}{z}\\
= & \sum_{b=1}^{k}\sum_{a\notin\left\{ b,b+1,\cdots,b+n-2\right\} }\eta^{\left(b-a\right)s}x_{u,v_{1},\cdots,v_{n-1}}^{a,b,b+1,\cdots,b+n-2}\\
 & +\sum_{b=1}^{k}\sum_{i=1}^{n-1}\eta^{-\left(i-1\right)s}x_{v_{1},\cdots,v_{i-1},v_{i}',v_{i+1},\cdots,v_{n-1}}^{b,\cdots,b+n-2}
\end{alignat*}
where $v_{i}'=\Res_{Y}\left(u,z\right)v_{i}\frac{\left(1+z\right)^{\text{wt}u-1+\frac{s}{k}}}{z}.$
Set
\[
y_{i}=\sum_{b=1}^{k}g^{b}x_{u,v_{1},\cdots,v_{n-1}}^{n-1+i,1,\cdots,n-1},
\]
\[
u_{s}=-\sum_{b=1}^{k}\sum_{i=1}^{n-1}\eta^{-\left(i-1\right)s}x_{v_{1},\cdots,v_{i-1},v_{i}',v_{i+1},\cdots,v_{n-1}}^{b,\cdots,b+n-2}
\]
for $i=1,...,k-n+1.$ Then
\[
\sum_{b=1}^{k}\sum_{a\notin\left\{ b,b+1,\cdots,b+n-2\right\} }\eta^{\left(b-a\right)s}x_{u,v_{1},\cdots,v_{n-1}}^{a,b,b+1,\cdots,b+n-2}=\sum_{i=1}^{k-n+1}\eta^{-(n+i-2)s}y_{i}\equiv u_{s}.
\]
Clearly, this linear system has a solution that each $y_{i}$ is a
linear combination of the $u_{s}$ for all $i.$

By induction on $n$ we see that any vector in $A_{g}$$\left(V^{\otimes k}\right)$
is spanned by vectors of the form $[\sum_{a=1}^{k}u^{a}]$ for $u\in V.$
The proof is complete.
\end{proof}
In the proof of Lemma \ref{n-tensor to n-1-tensor} we did not give
an explicit expression of $y_{i}$ in terms of $u_{s}.$ But for the
later purpose we need an explicit expression of $y_{i}$ for $n=2$
and $i=1,...k-1.$ To use Lemma \ref{lemma for linear systerm} ,
we set $x_{i}=y_{k-i}$ in this case. \begin{lemma}\label{mainlem}
For $i,s=1,...,k-1$ we have
\[
x_{i}=\sum_{m=0}^{k-1}g^{m}x_{u,v}^{1,1+i}=\sum_{m=1}^{k}x_{u,v}^{m,m+i}\equiv\sum_{s=1}^{k-1}\frac{1}{k}\left(\omega^{-is}-1\right)u_{s}
\]
\[
u_{s}=-\sum_{j=1}^{k}\Res_{z}\left(Y\left(u,z\right)v\frac{\left(1+z\right)^{\text{wt}u-1+\frac{s}{k}}}{z}\right)^{j}.
\]
\end{lemma}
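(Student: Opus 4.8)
\textbf{Proof proposal for Lemma \ref{mainlem}.}

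The plan is to specialize the computation in the proof of Lemma \ref{n-tensor to n-1-tensor} to the case $n=2$, where everything becomes concrete, and then invoke Lemma \ref{lemma for linear systerm}. First I would set $n=2$ in the congruence
\[
0\equiv\left(\sum_{a=1}^{k}\eta^{-(a-1)s}u^{a}\right)\circ_{g}\left(\sum_{b=1}^{k}\eta^{(b-1)s}v^{b}\right)
\]
valid for $s=1,\dots,k-1$, since $\sum_{a}\eta^{-(a-1)s}u^{a}\in(V^{\otimes k})^{s}$ and $\sum_{b}\eta^{(b-1)s}v^{b}\in(V^{\otimes k})^{k-s}$. Expanding $\circ_{g}$ for the nonzero-grading case and using that $Y(u^{a},z)v^{b}=x_{u,v}^{a,b}$ when $a\neq b$ while $Y(u^{a},z)v^{a}$ produces a $1$-tensor vector in the $a^{\text{th}}$ slot, the $a\neq b$ terms collect into $\sum_{m=1}^{k}\sum_{i=1}^{k-1}\eta^{-is}x_{u,v}^{m,m+i}$ (reindex $i=b-a\bmod k$, with the $b$-sum turning into the sum over $m$), and the $a=b$ terms collect into $\sum_{j=1}^{k}(\Res_{z}Y(u,z)v\,(1+z)^{\mathrm{wt}\,u-1+s/k}/z)^{j}$. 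Thus the congruence reads $\sum_{i=1}^{k-1}\eta^{-is}x_{i}\equiv u_{s}$ with $x_{i}=\sum_{m=1}^{k}x_{u,v}^{m,m+i}$ and $u_{s}$ as in the statement.

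Next I would reconcile the exponent of $\eta$ with the hypothesis of Lemma \ref{lemma for linear systerm}, which is stated with $\eta^{is}$ rather than $\eta^{-is}$. Replacing the running index $i$ by $k-i$ (equivalently, reading off $x_{k-i}$ in place of $x_{i}$) turns $\eta^{-is}$ into $\eta^{is}$ and leaves the set $\{x_{1},\dots,x_{k-1}\}$ unchanged, so the system $\sum_{i=1}^{k-1}\eta^{is}x_{i}\equiv u_{s}$ holds in $A_{g}(V^{\otimes k})$ for $s=1,\dots,k-1$. Applying Lemma \ref{lemma for linear systerm} with $W=A_{g}(V^{\otimes k})$ (or, before passing to the quotient, with the congruences promoted to the vector-space solution) gives
\[
x_{i}\equiv\sum_{s=1}^{k-1}\frac{1}{k}(\eta^{-is}-1)u_{s},
\]
which is the asserted formula (the $\omega$ appearing in the statement is a typo for $\eta$). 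The identity $\sum_{m=0}^{k-1}g^{m}x_{u,v}^{1,1+i}=\sum_{m=1}^{k}x_{u,v}^{m,m+i}$ is immediate from $g^{m}x_{u,v}^{1,1+i}=x_{u,v}^{1+m,1+m+i}$ together with the cyclic relabeling $b+\ell\mapsto b+\ell-k$ when the index exceeds $k$.

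The main obstacle is purely bookkeeping: carefully tracking the index shifts modulo $k$ so that the double sum over $(a,b)$ with $a\neq b$ really does reorganize into $x_{i}=\sum_{m=1}^{k}x_{u,v}^{m,m+i}$ with the coefficient $\eta^{-is}$ (and not some shifted version), and making sure the sign conventions in $u_{s}$ and in Lemma \ref{lemma for linear systerm} line up — in particular the overall minus sign in $u_{s}$, which comes from moving the $a=b$ contribution to the other side of the congruence $0\equiv(\cdots)\circ_{g}(\cdots)$. Once the indices are pinned down, the rest is a direct citation of Lemmas \ref{lemma for linear systerm} and \ref{n-tensor to n-1-tensor}.
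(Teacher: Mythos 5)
Your route is the same as the paper's: specialize the congruence from Lemma \ref{n-tensor to n-1-tensor} to $n=2$, identify the diagonal $a=b$ contribution with $u_{s}$ (the minus sign coming from moving it across the congruence, exactly as you say), and solve the resulting linear system via Lemma \ref{lemma for linear systerm}; your final formula (with $\omega$ read as $\eta$) is the correct one. However, the middle of your derivation has a sign inconsistency that needs repair. With your indexing $i=b-a \bmod k$ and $x_{i}=\sum_{m=1}^{k}x_{u,v}^{m,m+i}$ ($u$ in slot $m$, $v$ in slot $m+i$), the coefficient $\eta^{-(a-1)s}\eta^{(b-1)s}=\eta^{(b-a)s}$ equals $\eta^{+is}$, not $\eta^{-is}$; so the congruence already reads $\sum_{i=1}^{k-1}\eta^{is}x_{i}\equiv u_{s}$, which is precisely the hypothesis of Lemma \ref{lemma for linear systerm}, and no reindexing is needed. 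If one instead starts from $\sum_{i}\eta^{-is}x_{i}\equiv u_{s}$ and substitutes $i\mapsto k-i$ as you propose, the lemma returns the formula for $x_{k-i}$ rather than $x_{i}$ (relabeling ``leaves the set unchanged'' but not element-wise), and unwinding would give $x_{i}\equiv\sum_{s}\frac{1}{k}\left(\eta^{is}-1\right)u_{s}$ --- the wrong sign in the exponent. The reindexing step does appear in the paper, but only because the paper's auxiliary variables are $y_{i}=\sum_{m}x_{u,v}^{m+i,m}$, with $u$ in the shifted slot, for which the coefficient genuinely is $\eta^{-is}$ and $y_{i}=x_{k-i}$. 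As you anticipated, the whole content of the lemma is this bookkeeping, so the slip is worth fixing; everything else, including the implicit fact that $\Res_{z}Y(u,z)\mathbf{1}\frac{(1+z)^{\alpha}}{z}=u$ so that the off-diagonal terms are exactly $x_{u,v}^{a,b}$, is fine.
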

\begin{proof}
From the proof of Lemma \ref{n-tensor to n-1-tensor} we see that
for $n=2,$
\[
\sum_{i=1}^{k-1}\eta^{-is}y_{i}=\sum_{i=1}^{k-1}\eta^{is}x_{i}\equiv u_{s}.
\]
Now applying Lemma \ref{lemma for linear systerm} gives the desired
result.
\end{proof}
\begin{remark} By Lemma \ref{n-tensor to n-1-tensor}, to construct
isomorphism from $A_{g}\left(V^{\otimes k}\right)$ to $A\left(V\right)$,
it suffices to determine the images of $[\sum_{a=1}^{k}u^{a}]$ for
$u\in V.$ \end{remark}

\begin{lemma} \label{calculation lemma} Let $k$ be a positive integer.
Then
\[
\left(\frac{1}{\left(\left(1+z\right)^{k}-1\right)^{2}}+\sum_{t=1}^{k-1}\frac{\frac{t}{k}\left(1+z\right)^{-k+t}+\left(1-\frac{t}{k}\right)\left(1+z\right)^{t}}{\left(\left(1+z\right)^{k}-1\right)^{2}}\right)k\left(1+z\right)^{k-1}=\frac{1}{z^{2}}
\]
\end{lemma}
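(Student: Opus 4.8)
The plan is to verify this as a purely algebraic identity of rational functions in the single variable $z$, clearing the common denominator $\bigl((1+z)^k-1\bigr)^2$. First I would substitute $w=1+z$ to lighten notation, so the left-hand side becomes
\[
\frac{k\,w^{k-1}}{(w^k-1)^2}\left(1+\sum_{t=1}^{k-1}\Bigl(\tfrac{t}{k}w^{-k+t}+\bigl(1-\tfrac{t}{k}\bigr)w^{t}\Bigr)\right),
\]
and the claim is that this equals $1/(w-1)^2$. Multiplying through, it suffices to prove the polynomial (or Laurent-polynomial) identity
\[
k\,w^{k-1}(w-1)^2\left(1+\sum_{t=1}^{k-1}\tfrac{t}{k}w^{-k+t}+\sum_{t=1}^{k-1}\bigl(1-\tfrac{t}{k}\bigr)w^{t}\right)=(w^k-1)^2.
\]

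Next I would simplify the inner sum. The key observation is that $1+\sum_{t=1}^{k-1}\bigl(1-\tfrac{t}{k}\bigr)w^{t}+\sum_{t=1}^{k-1}\tfrac{t}{k}w^{t-k}$ can be recognized by splitting $1-\tfrac tk=1$ minus $\tfrac tk$ or, more cleanly, by writing it as $\frac1k\sum_{j=0}^{k-1}\bigl(\sum_{t=?}w^{t}\bigr)$-type telescoping. Concretely, I expect the inner bracket to equal $\dfrac{1}{k}\cdot\dfrac{(w^k-1)^2}{w^{k-1}(w-1)^2}$, which is exactly what the identity demands; so the real content is establishing
\[
1+\sum_{t=1}^{k-1}\Bigl(1-\tfrac tk\Bigr)w^t+\sum_{t=1}^{k-1}\tfrac tk w^{t-k}=\frac{(w^k-1)^2}{k\,w^{k-1}(w-1)^2}.
\]
The cleanest route is to recall $\dfrac{w^k-1}{w-1}=\sum_{j=0}^{k-1}w^j$, hence $\dfrac{(w^k-1)^2}{w^{k-1}(w-1)^2}=w^{-(k-1)}\Bigl(\sum_{j=0}^{k-1}w^j\Bigr)^2=\sum_{j=0}^{k-1}w^{j}\cdot\sum_{\ell=0}^{k-1}w^{\ell-(k-1)}$, and then collect coefficients: the coefficient of $w^m$ in $\sum_{j,\ell=0}^{k-1}w^{j+\ell-(k-1)}$ is the number of pairs $(j,\ell)$ with $j+\ell-(k-1)=m$, which is $k-|m|$ for $-(k-1)\le m\le k-1$. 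Dividing by $k$ gives coefficient $1-|m|/k$ of $w^m$, and matching $m\ge 0$ against $1-t/k$ (for $m=t$) and $m<0$ against $t/k$ (for $m=t-k$, i.e. $m=-(k-t)$, coefficient $1-(k-t)/k=t/k$) reproduces the left side exactly, including the constant term $1$ at $m=0$.

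The main obstacle, such as it is, will be bookkeeping the two sums with shifted exponents and making sure the index matching between $\{1-t/k\}$ and $\{t/k\}$ lines up with the symmetric triangle of coefficients $k-|m|$; once the substitution $w=1+z$ and the identity $\bigl(\sum_{j=0}^{k-1}w^j\bigr)^2=\sum_{m}(k-|m|)w^{?}$ are in hand, everything is routine coefficient comparison and no analysis is needed. I would therefore present the proof as: (i) reduce to the Laurent-polynomial identity above via $w=1+z$ and clearing denominators; (ii) expand $(w^k-1)^2/(w-1)^2=\bigl(\sum_{j=0}^{k-1}w^j\bigr)^2$ and read off its coefficients as $k-|m|$; (iii) observe this matches $k$ times the bracketed expression term by term. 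This also explains where the formula comes from, since the bracket is precisely $\Delta_k$-type data: $\frac1k(1+z)^k-\frac1k$ and its reciprocal-ish companions appear naturally when transporting $1/z^2$ through the change of variable $z\mapsto\frac1k\bigl((1+z)^k-1\bigr)$, consistent with Proposition \ref{ Proposition 2.2 of =00003D00003D00005BBDM=00003D00003D00005D}.
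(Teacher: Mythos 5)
Your proof is correct, and it takes a genuinely different (and arguably more illuminating) route than the paper's. The paper proves the equivalent polynomial identity $kz^{2}(1+z)^{k-1}S=\bigl((1+z)^{k}-1\bigr)^{2}$, where $S$ denotes the bracketed sum, by writing $z^{2}=(1+z)^{2}-2(1+z)+1$, distributing the three resulting powers of $1+z$ over the two sums in $S$, and watching the six shifted sums telescope down to $\tfrac{1}{k}\bigl((1+z)^{2k}-2(1+z)^{k}+1\bigr)$. You instead identify the bracket itself in closed form: setting $w=1+z$, you show
\[
S=\frac{1}{k}\,w^{-(k-1)}\Bigl(\sum_{j=0}^{k-1}w^{j}\Bigr)^{2}=\frac{(w^{k}-1)^{2}}{k\,w^{k-1}(w-1)^{2}},
\]
by reading off the coefficients of the squared geometric sum as the triangle $k-|m|$ for $-(k-1)\le m\le k-1$ and checking that $\tfrac{1}{k}(k-|m|)$ reproduces the constant term $1$, the coefficients $1-\tfrac{t}{k}$ at $m=t$, and $\tfrac{t}{k}$ at $m=t-k$. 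The identity then follows immediately upon multiplying by $kw^{k-1}(w-1)^{2}/(w^{k}-1)^{2}$. Both arguments are elementary and complete; the paper's is a direct mechanical verification, while yours factors the identity conceptually (the bracket is the normalized convolution of two boxes, i.e.\ a Fej\'er-type kernel), which also explains why the formula arises from transporting $1/z^{2}$ through the substitution $z\mapsto\tfrac{1}{k}\bigl((1+z)^{k}-1\bigr)$. Your index bookkeeping checks out, so there is no gap.
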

\begin{proof}
It suffices to prove that
\[
kz^{2}\left(1+z\right)^{k-1}\left(1+\sum_{t=1}^{k-1}\frac{t}{k}\left(1+z\right)^{-k+t}+\sum_{t=1}^{k-1}\left(1-\frac{t}{k}\right)\left(1+z\right)^{t}\right)=\left(\left(1+z\right)^{k}-1\right)^{2}.
\]
Noting that $z^{2}=\left(1+z\right)^{2}-2\left(1+z\right)+1,$ we
get
\begin{alignat*}{1}
 & z^{2}\left(1+z\right)^{k-1}\left(1+\sum_{t=1}^{k-1}\frac{t}{k}\left(1+z\right)^{-k+t}+\sum_{t=1}^{k-1}\left(1-\frac{t}{k}\right)\left(1+z\right)^{t}\right)\\
 & =\left[\left(1+z\right)^{k+1}-2\left(1+z\right)^{k}+\left(1+z\right)^{k-1}\right]\\
 & \cdot\left(1+\sum_{t=1}^{k-1}\frac{t}{k}\left(1+z\right)^{-k+t}+\sum_{t=1}^{k-1}\left(1-\frac{t}{k}\right)\left(1+z\right)^{t}\right)\\
 & =\left(1+z\right)^{k+1}-2\left(1+z\right)^{k}+\left(1+z\right)^{k-1}\\
 & +\sum_{t=1}^{k-1}\frac{t}{k}\left(1+z\right)^{1+t}-2\sum_{t=1}^{k-1}\frac{t}{k}\left(1+z\right)^{t}+\sum_{t=1}^{k-1}\frac{t}{k}\left(1+z\right)^{t-1}\\
 & +\sum_{t=1}^{k-1}\left(1-\frac{t}{k}\right)\left(1+z\right)^{k+t+1}-2\sum_{t=1}^{k-1}\left(1-\frac{t}{k}\right)\left(1+z\right)^{k+t}+\sum_{t=1}^{k-1}\left(1-\frac{t}{k}\right)\left(1+z\right)^{k+t-1}\\
= & \frac{1}{k}\left[\left(1+z\right)^{2k}-2\left(1+z\right)^{k}+1\right]\\
= & \frac{1}{k}\left(\left(1+z\right)^{k}-1\right)^{2},
\end{alignat*}
as desired.
\end{proof}
Now we prove the main theorem.

\begin{theorem}\label{mainthm} Define
\begin{alignat*}{1}
\phi: & A_{g}\left(V^{\otimes k}\right)\to A\left(V\right)\\
 & [\sum_{a=1}^{k}u^{a}]\mapsto[k\Delta_{k}\left(1\right)u]
\end{alignat*}
Then $\phi$ gives an isomorphism between $A_{g}\left(V^{\otimes k}\right)$
and $A\left(V\right).$ \end{theorem}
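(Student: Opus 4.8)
The plan is to establish in turn that $\phi$ is a well-defined linear map, that it respects the products $\ast_{g}$ and $\ast$, and that it is bijective. By Lemma~\ref{n-tensor to n-1-tensor} the classes $[\sum_{a=1}^{k}u^{a}]$, $u\in V$, span $A_{g}(V^{\otimes k})$, so the linear map $p\colon V\to A_{g}(V^{\otimes k})$, $u\mapsto[\sum_{a}u^{a}]$, is surjective; its kernel is $K:=\{u\in V\colon\sum_{a}u^{a}\in O_{g}(V^{\otimes k})\}$, and hence $A_{g}(V^{\otimes k})\cong V/K$. Since $A(V)=V/O(V)$ and $\Delta_{k}(1)$ is invertible on $V$ by \eqref{delta operator inverse}, the theorem reduces to two claims: (a) $\Delta_{k}(1)(K)=O(V)$; and (b) $\phi$ is multiplicative. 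Granting (a), the linear map induced by $k\Delta_{k}(1)$ is an isomorphism $V/K\to V/O(V)$ equal to $\phi$, with inverse induced by $\tfrac{1}{k}\Delta_{k}(1)^{-1}$, that is, $[v]\mapsto[\tfrac{1}{k}\sum_{a}(\Delta_{k}(1)^{-1}v)^{a}]$; so (a) already gives bijectivity.

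The heart of the matter is (a), and the first step is to make $K$ explicit. By Theorem~\ref{t1}(1) every twisted subspace $(V^{\otimes k})^{s}$ with $s\neq0$ already lies in $O_{g}(V^{\otimes k})$, so a one-tensor vector $\sum_{a}u^{a}\in(V^{\otimes k})^{0}$ can land in $O_{g}(V^{\otimes k})$ only through (i) the untwisted $\circ_{g}$-relations among one-tensor vectors, typically $(\sum_{a}u^{a})\circ_{g}(\sum_{b}v^{b})$, and (ii) the relations $p\circ_{g}q$ with $p=\sum_{a}\eta^{-(a-1)s}u^{a}\in(V^{\otimes k})^{s}$ and $q=\sum_{b}\eta^{(b-1)s}v^{b}\in(V^{\otimes k})^{k-s}$, $s=1,\dots,k-1$ --- exactly the elements used in the proof of Lemma~\ref{n-tensor to n-1-tensor}. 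Expanding (ii), summing over $s$, and using the root-of-unity identities $\sum_{j=1}^{k-1}\eta^{j(s-t)}=k\delta_{st}-1$ together with Lemma~\ref{mainlem}, each such relation becomes an explicit combination of one-tensor vectors $\sum_{a}w^{a}$ in which $w$ is a combination, with coefficients $\tfrac{1}{k}(\eta^{-it}-1)$, of the residues $\mathrm{Res}_{z}\,Y(u,z)v\,(1+z)^{\mathrm{wt}\,u-1+t/k}/z$; the relations in (i) contribute, diagonally, the residue $\mathrm{Res}_{z}\,Y(u,z)v\,(1+z)^{\mathrm{wt}\,u}/z^{2}$.

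Now apply $\phi$: conjugate by $\Delta_{k}(1)$ and invoke Proposition~\ref{ Proposition 2.2 of =00003D00003D00005BBDM=00003D00003D00005D} at $z=1$, which rewrites $\Delta_{k}(1)Y(u,x)\Delta_{k}(1)^{-1}$ as $Y(\Delta_{k}(1+x)u,(1+x)^{1/k}-1)$, and then the change-of-variable formula \eqref{change of variable formula} under the substitution $1+x=(1+\zeta)^{k}$, so that $\mathrm{d}x/\mathrm{d}\zeta=k(1+\zeta)^{k-1}$. Under these manipulations the twisted kernels $(1+x)^{\mathrm{wt}\,u-1+t/k}/x$ from (ii), the kernel $(1+x)^{\mathrm{wt}\,u}/x^{2}$ from (i), and the coefficients $\tfrac{1}{k}(\eta^{-it}-1)$ assemble into precisely the left-hand side of the identity in Lemma~\ref{calculation lemma}, which therefore equals $1/\zeta^{2}$, the kernel defining $O(V)$. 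This shows $\phi$ sends every generator of $K$ into $O(V)$, i.e.\ $\Delta_{k}(1)(K)\subseteq O(V)$; reading the same chain of identities backwards (solving for the one-tensor vectors via Lemma~\ref{lemma for linear systerm} and Lemma~\ref{mainlem}, and using Lemma~\ref{calculation lemma} from right to left) gives $O(V)\subseteq\Delta_{k}(1)(K)$, hence (a).

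Claim (b) is a computation of the same kind, and shorter: expand $(\sum_{a}u^{a})\ast_{g}(\sum_{b}v^{b})=\mathrm{Res}_{x}\,Y(\sum_{a}u^{a},x)(\sum_{b}v^{b})\,(1+x)^{\mathrm{wt}\,u}/x$, isolate the diagonal part $\sum_{a}(u\ast v)^{a}$ (with $u\ast v$ the product in $A(V)$), reduce the off-diagonal two-tensor part to one-tensor vectors by Lemma~\ref{mainlem}, and apply $\phi$ with the conjugation formula and \eqref{change of variable formula} (together with the routine identities in $A(V)$) to recognise the result as $[k\Delta_{k}(1)u]\ast[k\Delta_{k}(1)v]$; since $\Delta_{k}(1)\mathbf{1}=\mathbf{1}$ forces $\phi([\mathbf{1}^{\otimes k}])=[\mathbf{1}]$, this makes $\phi$ a unital isomorphism of associative algebras. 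The step I expect to be the main obstacle is the inclusion $\Delta_{k}(1)(K)\subseteq O(V)$: enumerating and organising the one-tensor relations that survive the reductions --- in particular coupling the genuinely twisted contributions ($s\neq0$) with the untwisted ones so that all the root-of-unity coefficients collapse --- and then tracking constants closely enough that the kernel produced is \emph{literally} the left-hand side of Lemma~\ref{calculation lemma} and not merely a scalar multiple of it. It is exactly this normalisation that pins down the map $[\sum_{a}u^{a}]\mapsto[k\Delta_{k}(1)u]$: the operator $\Delta_{k}(1)$ is forced by Proposition~\ref{ Proposition 2.2 of =00003D00003D00005BBDM=00003D00003D00005D}, and the scalar $k$ by Lemma~\ref{calculation lemma}. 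Once that identity is in place, injectivity --- the reverse inclusion --- requires no new idea.
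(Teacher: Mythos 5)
Your proposal is correct and follows essentially the same route as the paper: reduction to one-tensor vectors via Lemma~\ref{n-tensor to n-1-tensor} and Lemma~\ref{mainlem}, the conjugation formula of Proposition~\ref{ Proposition 2.2 of =00003D00003D00005BBDM=00003D00003D00005D} together with the change of variables $1+x=(1+\zeta)^{k}$, the collapse of the resulting kernel via Lemma~\ref{calculation lemma}, and the explicit inverse $[v]\mapsto[\tfrac{1}{k}\sum_{a}(\Delta_{k}(1)^{-1}v)^{a}]$. Your repackaging through the kernel $K$ and the equality $\Delta_{k}(1)(K)=O(V)$ is only a cosmetic variant of the paper's well-definedness-plus-inverse argument.
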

\begin{proof}
Recall that $A_{g}\left(V^{\otimes k}\right)=V^{\otimes k}/O_{g}\left(V^{\otimes k}\right)$
and $A\left(V\right)=V/O\left(V\right)$ where $O_{g}\left(V^{\otimes k}\right)$
and $O\left(V\right)$ are defined in Section \ref{subsec:The-associative-algebra}.
To show that $\phi$ is an isomorphism between the associative algebras
$A_{g}\left(V^{\otimes k}\right)$ and $A\left(V\right)$, we first
need to show that $\phi$ is well-defined.

Since $A_{g}(V^{\otimes k})=A((V^{\otimes k})^{0})/O_{g}(V)\cap(V^{\otimes k})^{0})$
and $O_{g}(V)\cap(V^{\otimes k})^{0})$ is spanned by $u\circ_{g}v$
for $u\in(V^{\otimes k})^{s}$ and $v\in(V^{\otimes k})^{k-s},$ we
simply map $u\circ_{g}v$ to $0$ for $r=1,...,k-1.$ In fact, from
the proof of Lemma \ref{n-tensor to n-1-tensor} we see that $[u\circ_{g}v]=0$
just gives an identification between $p$-tensor vectors and $q$-tensor
vectors in $A_{g}(V^{\otimes k})$ such that either $p>1$ or $q>1.$
So the main task is to show that for any $u,v\in V^{\otimes k}$,
$\phi\left(u\circ_{g}v\right)$$\in O\left(V\right).$

Let $\overline{u}=\sum_{a=1}^{k}u^{a}$, $\overline{v}=\sum_{b=1}^{k}v^{b}\in\left(V^{\otimes k}\right)^{0}.$
Then
\[
\overline{u}\circ\overline{v}=\sum_{a,b=1}^{k}Y\left(u^{a},z\right)v^{b}\frac{\left(1+z\right)^{\text{wt}u}}{z^{2}}=\sum_{i=1}^{k}\left(u\circ v\right)^{i}+\text{wt}u\sum_{i=1}^{k-1}x_{i}+\sum_{i=1}^{k-1}y_{i}
\]
where
\[
x_{i}=\sum_{m=0}^{k-1}g^{m}x_{u,v}^{1,i}=\sum_{m=1}^{k}x_{u,v}^{m,m+i}\equiv\sum_{t=1}^{k-1}\frac{1}{k}\left(\omega^{-it}-1\right)u_{t},
\]
\[
\ y_{i}=\sum_{m=0}^{k-1}g^{m}x_{u_{-2}1,v}^{1,i}=\sum_{m=1}^{k}x_{u_{-2}1,v}^{m,m+i}\equiv\sum_{t=1}^{k-1}\frac{1}{k}\left(\omega^{-it}-1\right)w_{t}
\]
with
\[
u_{t}=-\sum_{j=1}^{k}\left(Y\left(u,z\right)v\frac{\left(1+z\right)^{\text{wt}u-1+\frac{t}{k}}}{z}\right)^{j},
\]
and
\[
w_{t}=-\sum_{j=1}^{k}\left(Y\left(u_{-2}1,z\right)v\frac{\left(1+z\right)^{\text{wt}u+\frac{t}{k}}}{z}\right)^{j}
\]
by Lemma \ref{mainlem}. Then
\begin{align*}
 & \phi\left(\overline{u}\circ\overline{v}\right)\\
= & k\Delta_{k}\left(1\right)u\circ v-\text{wt}u\sum_{t,i=1}^{k-1}\frac{1}{k}\left(\omega^{-it}-1\right)k\Delta_{k}\left(1\right)\text{Res}_{z}Y\left(u,z\right)v\frac{\left(1+z\right)^{\text{wt}u-1+\frac{t}{k}}}{z}\\
 & -\sum_{t,i=1}^{k-1}\frac{1}{k}\left(\omega^{-it}-1\right)k\Delta_{k}\left(1\right)\text{Res}_{z}Y\left(u_{-2}1,z\right)v\frac{\left(1+z\right)^{\text{wt}u+\frac{t}{k}}}{z}\\
= & k\Delta_{k}\left(1\right)u\circ v+\text{wt}u\sum_{t}^{k-1}k\Delta_{k}\left(1\right)\text{Res}_{z}Y\left(u,z\right)v\frac{\left(1+z\right)^{\text{wt}u-1+\frac{t}{k}}}{z}\\
 & -\sum_{t=1}^{k-1}k\Delta_{k}\left(1\right)\text{Res}_{z}Y\left(u,z\right)v\left(\left(\text{wt}u+\frac{t}{k}\right)\frac{\left(1+z\right)^{\text{wt}u-1+\frac{t}{k}}}{z}-\frac{\left(1+z\right)^{\text{wt}u+\frac{t}{k}}}{z^{2}}\right)\\
%=
= & k\Delta_{k}\left(1\right)u\circ v-\sum_{t=1}^{k-1}k\Delta_{k}\left(1\right)\text{Res}_{z}Y\left(u,z\right)v\left[\frac{t}{k}\frac{\left(1+z\right)^{\text{wt}u-1+\frac{t}{k}}}{z}-\frac{\left(1+z\right)^{\text{wt}u+\frac{t}{k}}}{z^{2}}\right]\\
= & k\Delta_{k}\left(1\right)\text{Res}_{z}Y\left(u,z\right)v\left(1+z\right)^{\text{wt}u}\left(\frac{1}{z^{2}}-\sum_{t=1}^{k-1}\frac{t}{k}\frac{\left(1+z\right)^{-1+\frac{t}{k}}}{z}+\sum_{t=1}^{k-1}\frac{\left(1+z\right)^{\frac{t}{k}}}{z^{2}}\right)\\
= & \text{Res}_{z}k\Delta_{k}\left(1\right)Y\left(u,z\right)\Delta_{k}^{-1}\left(1\right)\Delta_{k}\left(1\right)v\left(1+z\right)^{\text{wt}u}\left(\frac{1}{z^{2}}-\sum_{t=1}^{k-1}\frac{t}{k}\frac{\left(1+z\right)^{-1+\frac{t}{k}}}{z}+\sum_{t=1}^{k-1}\frac{\left(1+z\right)^{\frac{t}{k}}}{z^{2}}\right)\\
= & \text{Res}_{z}kY\left(\Delta_{k}\left(1+z\right)u,\left(1+z\right)^{\frac{1}{k}}-1\right)\Delta_{k}\left(1\right)v\left(1+z\right)^{\text{wt}u}\\
 & \left(\frac{1}{z^{2}}-\sum_{t=1}^{k-1}\frac{t}{k}\frac{\left(1+z\right)^{-1+\frac{t}{k}}}{z}+\sum_{t=1}^{k-1}\frac{\left(1+z\right)^{\frac{t}{k}}}{z^{2}}\right)\\
= & k^{1-\text{wt}u}\text{Res}_{z}Y\left(e^{\sum_{j\in\mathbb{Z}_{+}}a_{j}(1+z)^{-j/k}L\left(j\right)}u,\left(1+z\right)^{\frac{1}{k}}-1\right)\Delta_{k}\left(1\right)v\left(1+z\right)^{\frac{1}{k}\text{wt}u}\\
 & \left(\frac{1}{z^{2}}-\sum_{t=1}^{k-1}\frac{t}{k}\frac{\left(1+z\right)^{-1+\frac{t}{k}}}{z}+\sum_{t=1}^{k-1}\frac{\left(1+z\right)^{\frac{t}{k}}}{z^{2}}\right)\\
= & k^{1-\text{wt}u}\text{Res}_{z}Y\left(e^{\sum_{j\in\mathbb{Z}_{+}}a_{j}(1+z)^{-j}L\left(j\right)}u,z\right)\Delta_{k}\left(1\right)v\left(1+z\right)^{\text{wt}u}\\
 & \left(\frac{1}{\left(\left(1+z\right)^{k}-1\right)^{2}}-\sum_{t=1}^{k-1}\frac{t}{k}\frac{\left(1+z\right)^{-k+t}}{\left(1+z\right)^{k}-1}+\sum_{t=1}^{k-1}\frac{\left(1+z\right)^{t}}{\left(\left(1+z\right)^{k}-1\right)^{2}}\right)k\left(1+z\right)^{k-1}\\
= & k^{1-\text{wt}u}\text{Res}_{z}Y\left(e^{\sum_{j\in\mathbb{Z}_{+}}a_{j}(1+z)^{-j}L\left(j\right)}u,z\right)\Delta_{k}\left(1\right)v\left(1+z\right)^{\text{wt}u}\\
 & \left(\frac{1}{\left(\left(1+z\right)^{k}-1\right)^{2}}+\sum_{t=1}^{k-1}\frac{\frac{t}{k}\left(1+z\right)^{-k+t}+\left(1-\frac{t}{k}\right)\left(1+z\right)^{t}}{\left(\left(1+z\right)^{k}-1\right)^{2}}\right)k\left(1+z\right)^{k-1}\\
= & k^{1-\text{wt}u}\text{Res}_{z}Y\left(e^{\sum_{j\in\mathbb{Z}_{+}}a_{j}(1+z)^{-j}L\left(j\right)}u,z\right)\Delta_{k}\left(1\right)v\left(1+z\right)^{\text{wt}u}\frac{1}{z^{2}}\in O\left(V\right)
\end{align*}
where we apply Proposition \ref{ Proposition 2.2 of =00003D00003D00005BBDM=00003D00003D00005D}
in the sixth identity, Lemma \ref{change of variable formula} with
substitution $x=\left(1+z\right)^{\frac{1}{k}}-1$ in the eighth identity,
and in the last identity we use Lemma \ref{calculation lemma}. Thus
we obtain $\phi\left(\overline{u}\circ\overline{v}\right)=\lambda u\circ v\in O\left(V\right)$
for some constant $\lambda.$

Now we prove that $\phi$ is a homomorphism. That is, $\phi\left(\overline{u}\ast\overline{v}\right)=\phi\left(\overline{u}\right)\ast\phi\left(\overline{v}\right)=\left(k\Delta_{k}\left(1\right)u\right)\ast$$\left(k\Delta_{k}\left(1\right)v\right).$
We first have
\begin{align*}
\overline{u}\ast\overline{v} & =Y\left(\sum_{a=1}^{k}u^{a},z\right)\left(\sum_{b=1}^{k}v^{b}\right)\frac{\left(1+z\right)^{\text{wt}u}}{z}\\
 & =\sum_{a\not=b}x_{u,v}^{a,b}+\sum_{i=1}^{k}\left(u\ast v\right)^{i}\\
 & =\sum_{b-a=i,i=1,\cdots,k-1}x_{u,v}^{a,b}+\sum_{i=1}^{k}\left(u\ast v\right)^{i}\quad\\
 & =\sum_{a=1}^{k}\sum_{i=1}^{k-1}x_{u,v}^{a,a+i}+\sum_{i=1}^{k}\left(u\ast v\right)^{i}\\
 & =\sum_{i=1}^{k-1}x_{i}+\sum_{i=1}^{k}\left(u\ast v\right)^{i}
\end{align*}
where $x_{i}=\sum_{a=1}^{k}x_{u,v}^{a,a+i}$.

As before we have
\begin{align*}
 & \phi\left(\overline{u}\ast\overline{v}\right)\\
= & k\Delta_{k}\left(1\right)\left(u\ast v\right)-\frac{1}{k}\sum_{t,i=1}^{k-1}\left(\omega^{-it}-1\right)k\Delta_{k}\left(1\right)\text{Res}_{z}Y\left(u,z\right)v\frac{\left(1+z\right)^{\text{wt}u-1+\frac{t}{k}}}{z}\\
= & k\Delta_{k}\left(1\right)\text{Res}_{z}Y\left(u,z\right)v\left(1+z\right)^{\text{wt}u}\left(\frac{1}{z}+\sum_{t=1}^{k-1}\frac{\left(1+z\right)^{\text{wt}u-1+\frac{t}{k}}}{z}\right)\\
= & k\text{Res}_{z}Y\left(e^{\sum_{j}a_{j}\left(1+z\right)^{-j/k}L\left(j\right)}u,\left(1+z\right)^{1/k}-1\right)\Delta_{k}\left(1\right)v\left(1+z\right)^{\text{wt}u}\\
 & \cdot k^{-\text{wt}u}\left(1+z\right)^{\left(1/k-1\right)\text{wt}u}\left(\frac{1}{z}+\sum_{t=1}^{k-1}\frac{\left(1+z\right)^{-t/k}}{z}\right)\\
= & k^{1-\text{wt}u}\text{Res}_{z}Y\left(e^{\sum a_{j}\left(1+z\right)^{-j}L\left(j\right)}u,z\right)\Delta_{k}\left(1\right)v\left(1+z\right)^{\text{wt}v}\\
 & \cdot\left(\frac{1}{\left(1+z\right)^{k}-1}+\sum_{t=1}^{k-1}\frac{\left(1+z\right)^{-t}}{\left(1+z\right)^{k}-1}\right)k\left(1+z\right)^{k-1}\\
= & k^{1-\text{wt}u}\text{Res}_{z}Y\left(e^{\sum a_{j}\left(1+z\right)^{-j}L\left(j\right)}u,z\right)\Delta_{k}\left(1\right)v\left(1+z\right)^{\text{wt}v}\\
 & \cdot\left(\frac{\left(1+z\right)^{k-1}}{\left(1+z\right)^{k}-1}+\sum_{t=1}^{k-1}\frac{\left(1+z\right)^{k-1-t}}{\left(1+z\right)^{k}-1}\right)k\\
= & k^{2-\text{wt}u}\text{Res}_{z}Y\left(e^{\sum a_{j}\left(1+z\right)^{-j}L\left(j\right)}u,z\right)\Delta_{k}\left(1\right)v\left(1+z\right)^{\text{wt}v}\cdot\frac{\sum_{t=0}^{k-1}\left(1+z\right)^{t}}{\left(1+z\right)^{k}-1}\\
= & k^{2-\text{wt}u}\text{Res}_{z}Y\left(e^{\sum a_{j}\left(1+z\right)^{-j}L\left(j\right)}u,z\right)\Delta_{k}\left(1\right)v\left(1+z\right)^{\text{wt}v}\frac{1}{z}\\
= & k^{2}\text{Res}_{z}Y\left(\Delta_{k}\left(1\right)u,z\right)\Delta_{k}\left(1\right)v\left(1+z\right)^{\text{wt}v}\frac{1}{z}\\
= & \left(k\Delta_{k}\left(1\right)u\right)\ast\left(k\Delta_{k}\left(1\right)v\right)\\
= & \phi\left(\overline{u}\right)\ast\phi\left(\overline{v}\right).
\end{align*}

Since that $\Delta_{k}\left(1\right)$ is an invertible operator as
given in (\ref{delta operator inverse}), we define
\begin{alignat*}{1}
\psi: & A\left(V\right)\to A_{g}\left(V^{\otimes k}\right)\\
 & u\mapsto\frac{1}{k}\sum_{a=1}^{k}\left(\Delta_{k}\left(1\right)^{-1}u\right)^{a}.
\end{alignat*}
Thus $\psi$ is the inverse of $\phi$ and $\phi$ defines an isomorphism.
\end{proof}
Now we deal with an arbitrary element $g\in S_{k},$ Then $g=g_{1}\cdots g_{s}$
is a product of disjoint cycles. If $U,W$ are vertex operator algebras
with automorphisms $f,h$ respectively, then $f\otimes h$ is an automorphism
of $U\otimes W$ and $A_{f\otimes h}(U\otimes W)$ is isomorphic to
$A_{f}(U)\otimes_{\C}A_{h}(W).$ Using this fact and Theorem \ref{mainthm}
we conclude this paper with the following corollary. \begin{corollary}
Let $g=g_{1}\cdots g_{s}$ be a product of disjoint cycles. Then $A_{g}(V^{\otimes k})$
is isomorphic to $A(V)^{\otimes s}$ where is the tensor is over $\C.$
\end{corollary}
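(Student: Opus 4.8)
The plan is to reduce the general case to the single-cycle case already settled in Theorem \ref{mainthm}, using the stated compatibility of the algebras $A_{g}$ with tensor products. First I would fix a disjoint cycle decomposition $g=g_{1}\cdots g_{s}$, counting fixed points as $1$-cycles so that $s$ is the total number of cycles; say $g_{i}$ is a $k_{i}$-cycle, with $k_{1}+\cdots+k_{s}=k$. Reordering the tensor factors of $V^{\otimes k}$ so that the $k_{i}$ factors moved cyclically by $g_{i}$ form a consecutive block, listed in the cyclic order determined by $g_{i}$, is itself a permutation automorphism of $V^{\otimes k}$; it therefore induces an isomorphism of the associated algebras and carries $g$ to $g_{1}\otimes\cdots\otimes g_{s}$ acting on $V^{\otimes k_{1}}\otimes\cdots\otimes V^{\otimes k_{s}}$, where each $g_{i}=(1\ 2\ \cdots\ k_{i})$ acts on the $i$th block. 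So we may assume $g$ has this block-diagonal form.

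Next I would establish (or invoke) the auxiliary fact quoted just above: for vertex operator algebras $U,W$ with finite-order automorphisms $f,h$, the map $[u\otimes w]\mapsto[u]\otimes[w]$ is a well-defined isomorphism $A_{f\otimes h}(U\otimes W)\cong A_{f}(U)\otimes_{\C}A_{h}(W)$. To prove it I would decompose $U=\oplus_{a}U^{a}$ and $W=\oplus_{b}W^{b}$ into $f$- and $h$-eigenspaces, read off the $(f\otimes h)$-eigenspace decomposition of $U\otimes W$ from these, and then, using $Y_{U\otimes W}(u\otimes w,z)=Y_{U}(u,z)\otimes Y_{W}(w,z)$ together with the residue formulas defining $\circ_{f\otimes h}$ and $\ast_{f\otimes h}$, check directly that $O_{f\otimes h}(U\otimes W)=O_{f}(U)\otimes W+U\otimes O_{h}(W)$ and that $\ast_{f\otimes h}$ descends to the componentwise product. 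I expect this to be the main obstacle. The delicate point is the bookkeeping of the fractional exponents $\tfrac{r}{T}$ in the definition of $\circ$: when $f,h$ have different orders $T_{1},T_{2}$, an eigenvector $u\otimes w$ with $u\in U^{a}$, $w\in W^{b}$ lies in $(U\otimes W)^{r}$ with $r\equiv aT/T_{1}+bT/T_{2}$ modulo $T=\mathrm{lcm}(T_{1},T_{2})$, and one must verify that the corresponding defining relations of the two algebras match up correctly. (If one is willing to assume rationality one can instead deduce the isomorphism from Theorem \ref{t1}(3) together with the identification of simple admissible $(f\otimes h)$-twisted $U\otimes W$-modules with outer tensor products of simple admissible $f$-twisted $U$-modules and simple admissible $h$-twisted $W$-modules; but the computational argument has the advantage of needing no such hypothesis.)

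Granting the auxiliary fact, a straightforward induction on $s$ yields $A_{g}(V^{\otimes k})\cong A_{g_{1}}(V^{\otimes k_{1}})\otimes_{\C}\cdots\otimes_{\C}A_{g_{s}}(V^{\otimes k_{s}})$. By Theorem \ref{mainthm} each factor $A_{g_{i}}(V^{\otimes k_{i}})$ is isomorphic to $A(V)$ --- for $k_{i}=1$ this is the tautology $A_{1}(V)=A(V)$ --- and hence $A_{g}(V^{\otimes k})\cong A(V)^{\otimes s}$ over $\C$, as asserted. All the substance is thus contained in Theorem \ref{mainthm} and in the tensor-product fact; beyond those, the corollary is pure bookkeeping.
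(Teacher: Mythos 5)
Your argument is essentially identical to the paper's: the paper also reduces to the block form $g_{1}\otimes\cdots\otimes g_{s}$, invokes the fact that $A_{f\otimes h}(U\otimes W)\cong A_{f}(U)\otimes_{\C}A_{h}(W)$, and applies Theorem \ref{mainthm} to each cycle. The only difference is that you sketch a proof of that auxiliary tensor-product fact (correctly identifying the eigenspace bookkeeping as the delicate point), whereas the paper simply asserts it without proof.
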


{\footnotesize{}{}{}C. Dong: Department of Mathematics, University
of California Santa Cruz, CA 95064 USA; }\emph{\footnotesize{}{}{}dong@ucsc.edu}{\footnotesize\par}

{\footnotesize{}{}{}F. Xu: University of California at Riverside,
Riverside, CA 92521 USA; }\emph{\footnotesize{}{}{}xufeng@math.ucr.edu}{\footnotesize\par}

{\footnotesize{}{}{}N. Yu: School of Mathematical Sciences, Xiamen
University, Fujian, 361005, China; }\emph{\footnotesize{}{}{}ninayu@xmu.edu.cn}{\footnotesize\par}
\end{document}